\newtheorem{lemma}{\bf Lemma}[section]
\newtheorem{prop}[lemma]{\bf Proposition}
\newtheorem{thm}[lemma]{\bf Theorem}
\DeclareMathOperator{\Sym}{Sym}
\title[]{Bounds in terms of the number of cyclic subgroups}
\author{Xiaofang Gao} 
\address{Xiaofang Gao. Departamento de Matem\'atica, Universidade de Bras\'ilia, Campus 
Universit\'ario \\ Darcy Ribeiro, Bras\'ilia-DF, 70910-900, Brazil. \newline
ORCID:  https://orcid.org/0000-0001-8106-7941}
\email{gaoxiaofang2020@hotmail.com}
\author{Martino Garonzi}
\address{Martino Garonzi. Departamento de Matem\'atica, Universidade de Bras\'ilia, Campus 
Universit\'ario Darcy Ribeiro, Bras\'ilia-DF, 70910-900, Brazil. \newline
ORCID: https://orcid.org/0000-0003-0041-3131}
\email{martino@mat.unb.br, mgaronzi@gmail.com}
\thanks{The first author acknowledges the support of the CAPES PhD
fellowship and the NSF of China - Grant number 12161035. 
The second author acknowledges the support of the Funda\c{c}\~{a}o de Apoio \`a Pesquisa do Distrito Federal (FAPDF) -- demanda espont\^{a}nea 09/2023 and of the Conselho Nacional 
de Desenvolvimento Cient\'ifico e Tecnol\'ogico (CNPq), Universal
- Grant number 402934/2021-0.}
\date{}
\subjclass[2020]{20D10}
\keywords{Finite groups, Cyclic subgroups, Maximal cyclic subgroups}
\begin{document}

\begin{abstract}
A family of groups is called (maximal) cyclic bounded ((M)CB) if, for every natural number $n$, there are only finitely many groups in the family with at most $n$ (maximal) cyclic subgroups. We prove that the family of groups of prime power order is MCB. We also prove that the family of finite groups without cyclic coprime direct factors is CB. As a consequence, a natural number $n \geqslant 10$ is prime if and only if there are only finitely many finite noncyclic groups with precisely $n$ cyclic subgroups.
\end{abstract}

\maketitle

\section{Introduction}
In this paper, $G$ will always denote a finite group. A \textit{covering} of $G$ is a family $\mathscr{H} = \{H_1,\ldots,H_k\}$ of proper subgroups of $G$ whose union is $G$. Note that $G$ admits a covering if and only if $G$ is noncyclic. The covering $\mathscr{H}$ of $G$ is called \textit{irredundant} if $\mathscr{H} \setminus \{H_i\}$ is not a covering for all $i \in \{1,\ldots,k\}$. In other words, $\mathscr{H}$ is irredundant if and only if no proper subfamily of $\mathscr{H}$ is a covering.

A subgroup $H$ of $G$ is called a \textit{maximal cyclic subgroup} if it is cyclic and it is not properly contained in any cyclic subgroup of $G$. It is easy to see that, if $G$ is noncyclic, then the maximal cyclic subgroups of $G$ form an irredundant covering. Rog\'erio \cite{Rogerio} defined $\lambda(G)$ to be the maximal size of an irredundant covering of $G$. This function has received attention in recent years, see \cite{BLR,GL,Rogerio}. By \cite[Proposition 4]{Rogerio}, if $G$ is noncyclic, then $\lambda(G)$ equals the number of maximal cyclic subgroups of $G$. In the present paper, if $G$ is a cyclic group, we set $\lambda(G)=1$, so that $\lambda(G)$ always coincides with the number of maximal cyclic subgroups of $G$. Using the fact that every cyclic subgroup is contained in a maximal cyclic subgroup (by finiteness of $G$), it is easy to see that a group $G$ is cyclic if and only if $\lambda(G)=1$.

If $X$ is an arbitrary subset of $G$, define 
\begin{equation} \label{formulaEuler}
    c(X) := \sum_{x \in X} \frac{1}{\varphi(o(x))}
\end{equation}
where $o(x)$ denotes the order of $x$ and $\varphi$ is Euler's totient function. Since a cyclic group of order $n$ has $\varphi(n)$ generators, the number of cyclic subgroups of $G$ is precisely equal to $c(G)$. More in general, if whenever $x \in X$ and $k$ is an integer coprime to $o(x)$ we have $x^k \in X$, then $c(X)$ equals the size of the set $\{\langle x \rangle\ :\ x \in X\}$, where $\langle x \rangle$ is the cyclic subgroup generated by $x$.

We say that an element $x \in G$ is \textit{primitive} if $\langle x \rangle$ is a maximal cyclic subgroup of $G$. Equivalently, whenever $x$ is a power of an element $y$, the element $y$ is a power of $x$. Let $P$ be the set of primitive elements of $G$. It is easy to show that $G$ has precisely $c(P)$ maximal cyclic subgroups, in other words $\lambda(G)=c(P)$. For example, if $G$ is a $p$-group (for some prime number $p$) then $\lambda(G) = c(G)-c(G^p)$, where $G^p = \{g^p\ :\ g \in G\}$ is the set of non-primitive elements.

Assume $A$ and $B$ are groups of coprime orders. Then $c(A \times B) = c(A) \cdot c(B)$ and $\lambda(A \times B) = \lambda(A) \cdot \lambda(B)$. This can also be proved by using (\ref{formulaEuler}). Specifically, in the case of $\lambda$, an element $(a,b) \in A \times B$ is primitive in $A \times B$ if and only if $a$ is primitive in $A$ and $b$ is primitive in $B$ (see Lemma \ref{primcoprime}).

We give some examples. If $G$ is a cyclic $p$-group of order $p^n$ (where $p$ is a prime number), then $c(G)=n+1$ and $\lambda(G)=1$. As an easy but less trivial example, consider $G=D_{2n}$, the dihedral group of order $2n$. The maximal cyclic subgroups of $G$ are the non-normal subgroups of order $2$ and the unique cyclic subgroup of order $n$, so that $\lambda(G)=n+1$. On the other hand $c(G) = n + d(n)$ where $d(n)$ is the number of positive divisors of $n$. If $m \geqslant 4$ is an integer, there exists a noncyclic (abelian) finite group $G$ with $c(G)=m$. Indeed we can write $m-2=np$ for some prime $p$ and some integer $n \geqslant 1$ and $c(C_p \times C_{p^n}) = np+2 = m$ by Lemma \ref{lambda-abelian}.

A natural question is the following: what does $\lambda(G)$ bound? Using a result of L. Pyber about noncommuting elements, we prove that the index of the center $|G:Z(G)|$ can be bounded above in terms of $\lambda(G)$ (see Proposition \ref{noncommuting}). In particular, the Fitting length and the derived length of a solvable group can be bounded above in terms of $\lambda(G)$. The same is true for the nilpotency class of a nilpotent group. Concerning the derived length, we prove the following.

\begin{thm} \label{solbound}
If $G$ is any finite solvable group then the derived length of $G$ is at most $2+\frac{5}{2} \log_3(\lambda(G))$.
\end{thm}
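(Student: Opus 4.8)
The plan is to bound the derived length \emph{not} through the estimate for $|G:Z(G)|$ coming from Proposition \ref{noncommuting} (which is too lossy to yield a clean constant), but by letting $G$ act on the set of its maximal cyclic subgroups and analysing the kernel of this action. Write $\mathrm{dl}(X)$ for the derived length of a finite group $X$. Let $\Omega$ be the set of maximal cyclic subgroups of $G$, so that $|\Omega|=\lambda(G)$, let $G$ act on $\Omega$ by conjugation (this is well defined, since a conjugate of a maximal cyclic subgroup is again maximal cyclic), and let $K\trianglelefteq G$ be the kernel of this action. Then $G/K$ embeds in $\Sym(\Omega)$, so $G/K$ is a solvable permutation group of degree $\lambda(G)$, while $K$ is exactly the set of elements of $G$ that normalize every maximal cyclic subgroup of $G$.

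The first step is to show $\mathrm{dl}(K)\leqslant 2$ by proving that $K$ is a Dedekind group. If $C$ is any cyclic subgroup of $G$, then $C$ is contained in some maximal cyclic subgroup $M$ (by finiteness of $G$); $K$ normalizes $M$, hence acts on the cyclic group $M$ by automorphisms, and $C$ is characteristic in $M$ because every subgroup of a cyclic group is its unique subgroup of that order, so $K$ normalizes $C$. Since every subgroup of $G$ is the join of its cyclic subgroups and normalizing a family of subgroups forces normalizing their join, $K$ normalizes every subgroup of $G$; in particular $K$ is a Dedekind group. By the classification of Dedekind groups, $K$ is abelian or a direct product of $Q_8$ with an abelian group, hence nilpotent of class at most $2$, so $\mathrm{dl}(K)\leqslant 2$.

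The second step is to invoke the sharp bound $\mathrm{dl}(H)\leqslant\frac{5}{2}\log_3 m$ valid for every solvable permutation group $H$ of degree $m\geqslant 1$, sharpness being witnessed by the iterated wreath powers of $\mathbb{F}_3^2\rtimes\GL(2,3)$, which act on $9^k$ points and have derived length $5k$. Taking $H=G/K$ and $m=\lambda(G)$ gives $\mathrm{dl}(G/K)\leqslant\frac{5}{2}\log_3\lambda(G)$, and combining with the general inequality $\mathrm{dl}(G)\leqslant\mathrm{dl}(K)+\mathrm{dl}(G/K)$ yields $\mathrm{dl}(G)\leqslant 2+\frac{5}{2}\log_3\lambda(G)$, as desired. (The case $\lambda(G)=1$, i.e. $G$ cyclic, is trivial and is also covered, since then $K=G$ and $\mathrm{dl}(G)\leqslant 1$.)

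The main obstacle is the permutation-group input: one needs precisely $\mathrm{dl}(H)\leqslant\frac{5}{2}\log_3 m$ with \emph{no} additive constant, so that the ``$2$'' in the final bound is accounted for solely by the Dedekind kernel $K$. If no convenient reference is available, this has to be proved directly by induction on $m$: pass to a transitive, then a primitive, constituent; a primitive solvable group is of affine type $V\rtimes H_0$ with $V$ elementary abelian of size $m_0$ and $H_0\leqslant\GL(V)$ completely reducible, which reduces the problem to the linear analogue (a bound on the derived length of a completely reducible solvable linear group in terms of $\dim V$), and the extremal configuration $\mathbb{F}_3^2\rtimes\GL(2,3)$ (derived length $5$, degree $9$) is exactly what forces the constant $\tfrac{5}{2}$ at base $3$. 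The remaining ingredients---the inequality $\mathrm{dl}(G)\leqslant\mathrm{dl}(K)+\mathrm{dl}(G/K)$ and the Dedekind step above---are routine.
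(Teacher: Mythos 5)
Your proof is correct and follows essentially the same route as the paper: let $G$ act by conjugation on the set of its maximal cyclic subgroups, show that the kernel of this action normalizes every subgroup of $G$ and hence has derived length at most $2$, and apply Dixon's $\tfrac{5}{2}\log_3$ bound for solvable permutation groups to the quotient. The only (cosmetic) difference is that you deduce the bound on the kernel from the classification of Dedekind groups, whereas the paper cites Schenkman's result that the norm of a group is contained in $Z_2(G)$; both are valid.
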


We say that a family of groups is \textit{(maximal) cyclic bounded ((M)CB)} if, for every natural number $n$, there are only finitely many groups $G$ in the family (up to isomorphism) with at most $n$ (maximal) cyclic subgroups. In other words, a family $\mathscr{F}$ of groups is CB (resp. MCB) if, for every natural number $n$, there are only finitely many groups $G$ in $\mathscr{F}$ (up to isomorphism) such that $c(G) \leqslant n$ (resp. $\lambda(G) \leqslant n$). Since $\lambda(G) \leqslant c(G)$, every MCB family is also CB. However, for example, the family of cyclic $2$-groups is CB but not MCB.

We prove the following.

\begin{thm} \label{MCB}
The family of noncyclic groups of prime power order is MCB.
\end{thm}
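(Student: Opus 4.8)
The plan is to bound $|G|$ above by an explicit function of $\lambda(G)$; this suffices, since there are only finitely many groups of any given order. Write $|G| = p^a$ and $\exp(G) = p^m$, and let $C_1, \dots, C_{\lambda(G)}$ be the maximal cyclic subgroups of $G$, whose union is $G$. Two elementary consequences of this cover will be used. Since $G$ is noncyclic, each $C_i$ is a proper subgroup of the $p$-group $G$, so $|C_i| \leqslant |G|/p$; together with $|G| \leqslant \sum_i |C_i|$ this gives $p \leqslant \lambda(G)$. Also $|C_i| \leqslant \exp(G)$, so $|G| \leqslant \lambda(G)\exp(G)$. Hence everything reduces to bounding $m$ in terms of $\lambda(G)$. (The bound $|G:Z(G)|\leqslant$ function of $\lambda(G)$ from Proposition~\ref{noncommuting} alone would not suffice, since the modular groups have centre of index $p^2$ but unbounded order; controlling $\exp(G)$ seems essential.)

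To bound $m$ I would first record that $\lambda$ does not increase on subgroups: if $E_1, \dots, E_t$ are the distinct maximal cyclic subgroups of $H \leqslant G$ and $M_j$ denotes a maximal cyclic subgroup of $G$ containing $E_j$, then $M_i = M_j$ forces $\langle E_i, E_j\rangle$ to be a cyclic subgroup of $H$ containing $E_i$, hence equal to $E_i$, and symmetrically equal to $E_j$; so $j \mapsto M_j$ is injective and $\lambda(H) \leqslant \lambda(G)$. Now pick $x \in G$ of order $p^m$. As $G$ is noncyclic, $\langle x\rangle$ is a proper subgroup, hence properly contained in its normalizer, so there is a subgroup $D$ with $\langle x\rangle \trianglelefteq D \leqslant G$ and $|D : \langle x\rangle| = p$; then $D$ is a noncyclic group of order $p^{m+1}$ possessing a cyclic subgroup of index $p$. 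By the classification of such $p$-groups, $D$ is isomorphic to $C_{p^m}\times C_p$, to the modular group of order $p^{m+1}$, or, when $p = 2$, to $D_{2^{m+1}}$, $SD_{2^{m+1}}$ or $Q_{2^{m+1}}$, and in each case $\lambda(D) \geqslant m$. Indeed, when $D$ is $C_{p^m}\times C_p$ or the modular group, write $\langle x\rangle$ for its cyclic subgroup of index $p$ and fix $y \in D \setminus\langle x\rangle$ of order $p$; since every $p$th power in $D$ lies in $\langle x\rangle$, none of the $m$ elements $x^{p^{m-k}}y$ with $1 \leqslant k \leqslant m$ is a $p$th power, so each generates a maximal cyclic subgroup, and these are pairwise distinct because the generators have pairwise distinct orders $p, p^2, \dots, p^m$. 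When $D$ is dihedral, semidihedral or generalized quaternion of order $2^{m+1}$, the elements lying outside its cyclic subgroup of index $2$ (of order $2$ in the first two cases, of order $4$ in the last) already generate at least $2^{m-1} \geqslant m$ distinct maximal cyclic subgroups. By the subgroup monotonicity, $\lambda(G) \geqslant \lambda(D) \geqslant m$.

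Combining the pieces, $\exp(G) = p^m \leqslant p^{\lambda(G)} \leqslant \lambda(G)^{\lambda(G)}$ (using $p \leqslant \lambda(G)$), whence $|G| \leqslant \lambda(G)\exp(G) \leqslant \lambda(G)^{\lambda(G)+1}$. In particular, if $\lambda(G) \leqslant n$ then $|G| \leqslant n^{n+1}$, so only finitely many such $G$ exist. The only step that is not purely formal is the inequality $\lambda(D) \geqslant m$ for the groups $D$ with a cyclic subgroup of index $p$: the families $C_{p^m}\times C_p$ and the modular groups, where $\lambda(D)$ grows only linearly in $m$, are the tightest instances, and I expect verifying these (via the classification) to be the main point needing care.
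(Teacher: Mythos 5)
Your proof is correct, but it takes a genuinely different route from the paper's. The paper argues in two cases: for abelian $G$ it decomposes $G$ as a product of cyclic groups, bounds the number of factors using Lemma \ref{cyclicfactor}, and reduces to $C_{p^a}\times C_p$ via Lemma \ref{lambda-abelian}; for nonabelian $G$ it invokes Pyber's theorem (Proposition \ref{noncommuting}) to bound $|G:Z(G)|$ and then bounds $|Z(G)|$ by embedding it in a noncyclic abelian subgroup, with the generalized quaternion groups as the residual case. You avoid Pyber's theorem entirely: the covering of $G$ by its $\lambda(G)$ maximal cyclic subgroups, each proper and of order at most $\exp(G)$, immediately gives $p\leqslant\lambda(G)$ and $|G|\leqslant\lambda(G)\exp(G)$, and you control $\exp(G)=p^m$ by producing a noncyclic subgroup $D$ of order $p^{m+1}$ with a cyclic subgroup of index $p$ and verifying $\lambda(D)\geqslant m$ through the classification of such groups, finishing with the subgroup monotonicity of $\lambda$ (your injectivity argument is essentially Proposition \ref{subquo}). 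What your route buys is elementarity --- the only external input is the classical classification of $p$-groups with a cyclic maximal subgroup rather than Pyber's theorem --- together with a sharper explicit bound, $|G|\leqslant t^{t+1}$ in place of the paper's $c^t\cdot t^{t^2}$. The supporting verifications all check out: in a nontrivial $p$-group an element is primitive exactly when it is not a $p$-th power, the $p$-th powers of $C_{p^m}\times C_p$ and of the modular groups do lie in the distinguished cyclic subgroup, and the elements $x^{p^{m-k}}y$ do have pairwise distinct orders $p,\ldots,p^m$. The one small inaccuracy is the parenthetical claim that every element of $SD_{2^{m+1}}$ outside the cyclic subgroup of index $2$ has order $2$ (half have order $4$), but this does not affect your count of at least $2^{m-1}\geqslant m$ distinct maximal cyclic subgroups, since squares still land in that cyclic subgroup and each subgroup so generated contains at most two of the $2^m$ outside elements.
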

More precisely, we prove that if $G$ is a noncyclic finite $p$-group and $t=\lambda(G)$ then $|G| \leqslant c^t \cdot t^{t^2}$ for some constant $c$.

\begin{thm} \label{centerless}
The family of groups $G$ such that every nontrivial Sylow subgroup of the center $Z(G)$ is noncyclic is MCB.
\end{thm}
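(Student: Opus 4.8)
The plan is to reduce the statement to Theorem~\ref{MCB} and Proposition~\ref{noncommuting}, the bridge being a monotonicity property of $\lambda$. Fix $n$ and let $G$ be a group in the family with $\lambda(G)\leqslant n$; the goal is to bound $|G|$ in terms of $n$. Since Proposition~\ref{noncommuting} bounds $|G:Z(G)|$ in terms of $\lambda(G)$, it suffices to bound $|Z(G)|$ in terms of $n$.

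The one ingredient not already available is the following: if $H\leqslant G$ then $\lambda(H)\leqslant\lambda(G)$. To see this, observe that two distinct maximal cyclic subgroups $C_1,C_2$ of $H$ cannot be contained in a common cyclic subgroup $D$ of $G$, because then $D\cap H$ would be a cyclic subgroup of $H$ containing both $C_1$ and $C_2$, forcing $C_1=D\cap H=C_2$ by maximality in $H$. Since $G$ is finite, every maximal cyclic subgroup of $H$ lies in at least one maximal cyclic subgroup of $G$; choosing one for each gives an injection from the maximal cyclic subgroups of $H$ into those of $G$, so $\lambda(H)\leqslant\lambda(G)$ (the case $H$ cyclic is trivial, as then $\lambda(H)=1$).

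Applying this to $H=Z(G)$ yields $\lambda(Z(G))\leqslant n$. Write the abelian group $Z(G)$ as the direct product $\prod_p Z_p$ of its Sylow subgroups. As the $Z_p$ have pairwise coprime orders, $\lambda(Z(G))=\prod_p\lambda(Z_p)$. By hypothesis each nontrivial $Z_p$ is noncyclic, hence $\lambda(Z_p)\geqslant 2$; thus at most $\log_2 n$ primes $p$ satisfy $Z_p\neq 1$, and for each of them $Z_p$ is a noncyclic $p$-group with $\lambda(Z_p)\leqslant n$. By Theorem~\ref{MCB} — concretely, by the estimate $|Z_p|\leqslant c^{\lambda(Z_p)}\lambda(Z_p)^{\lambda(Z_p)^2}$ recorded after it, which is uniform in $p$ — each $|Z_p|$ is bounded by a function of $n$ alone, and therefore so is $|Z(G)|=\prod_p|Z_p|\leqslant\bigl(c^{\,n}n^{\,n^2}\bigr)^{\log_2 n}$. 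Combining with the bound on $|G:Z(G)|$ from Proposition~\ref{noncommuting} bounds $|G|$ in terms of $n$, which is the assertion.

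The difficulty here is mild: the substantive content sits in Theorem~\ref{MCB} and Proposition~\ref{noncommuting}, so the only genuine work is the short monotonicity lemma $\lambda(H)\leqslant\lambda(G)$. The only point requiring a little care is that Theorem~\ref{MCB} must be invoked in a form uniform over all primes $p$ (the explicit bound provides this), since nothing constrains which primes divide $|Z(G)|$ beyond their number being at most $\log_2 n$.
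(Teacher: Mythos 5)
Your argument is correct and follows essentially the same route as the paper: bound $|G:Z(G)|$ via Proposition~\ref{noncommuting}, decompose $Z(G)$ into its Sylow subgroups, use multiplicativity of $\lambda$ over coprime factors and $\lambda(Z_p)\geqslant 2$ to bound the number of primes involved, and invoke the (uniform-in-$p$) bound from Theorem~\ref{MCB} on each factor. The monotonicity $\lambda(H)\leqslant\lambda(G)$ that you prove from scratch (correctly, and a bit more cleanly via $D\cap H$) is already Proposition~\ref{subquo} in the paper.
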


In particular, the family of groups with trivial center is MCB.

We say that a group $H$ is a \textit{coprime direct factor} of a group $G$ if $G$ is isomorphic to a direct product $H \times K$ and $|H|$, $|K|$ are coprime. Note that, in this situation, if $H$ is a cyclic $p$-group of order $p^h$ then $c(G) = (h+1) \cdot c(K)$ and this shows that the family of all finite groups is far from being CB (we have no control on $p$). However, we prove the following result.

\begin{thm} \label{CB}
The family of groups without cyclic coprime direct factors is CB.
\end{thm}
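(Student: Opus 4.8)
The plan is to prove that if $G$ has no cyclic coprime direct factor then $|G|$ is bounded above by a function of $n:=c(G)$; since there are only finitely many isomorphism types of group of any given order, this yields the theorem. Two ingredients of the bound are cheap. First, each prime $p$ dividing $|G|$ yields a cyclic subgroup of order $p$, and distinct primes yield distinct ones, so $|G|$ has at most $n-1$ prime divisors. Second, if $p^{a}$ is the $p$-part of $|G|$, then a Sylow $p$-subgroup $P$ of $G$ has at least $a+1$ cyclic subgroups — an easy induction: a maximal subgroup $M<P$ has at least $a$ of them by induction, and $P$ has at least one cyclic subgroup not contained in $M$ since $P$ is the union of its cyclic subgroups — and these are cyclic subgroups of $G$, so $a\le n-1$. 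Hence the whole problem reduces to showing that every prime divisor of $|G|$ is at most $n$; then $|G|\le n^{n(n-1)}$.

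So assume, for contradiction, that a prime $p>n$ divides $|G|$. As $G$ is noncyclic and nontrivial, $n\ge 4$, so $p\ge 5$ is odd. A noncyclic $p$-group with $p$ odd has more than one subgroup of order $p$, hence at least $p+1$ of them (the number of such subgroups being $\equiv 1\pmod p$), hence at least $p+2>n$ cyclic subgroups; since every cyclic subgroup of $P$ is one of $G$ and $c(G)=n$, the Sylow $p$-subgroup $P$ must be cyclic, of order $p^{a}$. Now a subgroup of $G$ of order $p^{a}$ is a Sylow $p$-subgroup, hence conjugate to $P$, hence cyclic; so the cyclic subgroups of $G$ of order $p^{a}$ are precisely the Sylow $p$-subgroups of $G$, and their number $n_{p}$ satisfies $n_{p}\le n<p$ together with $n_{p}\equiv 1\pmod p$, forcing $n_{p}=1$ and $P\trianglelefteq G$. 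Conjugation embeds $G/C_{G}(P)$ in $\Aut(P)$; since $P\le C_{G}(P)$ is a Sylow $p$-subgroup, $|G:C_{G}(P)|$ is a $p'$-number, hence divides the $p'$-part of $|\Aut(C_{p^{a}})|=p^{a-1}(p-1)$, namely $p-1$. If $|G:C_{G}(P)|=1$ then $P$ is a central Sylow $p$-subgroup, so $G$ has a normal $p$-complement (Burnside) which, $P$ being central, is a direct complement; thus the nontrivial cyclic $p$-group $P$ is a coprime direct factor of $G$, contrary to hypothesis. Hence $|G:C_{G}(P)|>1$; pick a prime $\ell$ dividing it (so $\ell\mid p-1$ and $\ell\ne p$) and $g\in G$ whose image in $G/C_{G}(P)$ has order $\ell$. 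Replacing $g$ by its $\ell$-part — whose image still has order $\ell$ — we may assume $g$ is an $\ell$-element, of order $\ell^{s}$ say, inducing on $P$ an automorphism $\alpha$ of order exactly $\ell$.

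Set $K:=P\langle g\rangle=P\rtimes\langle g\rangle$, a group of order $p^{a}\ell^{s}$ with $\langle g\rangle$ as a Sylow $\ell$-subgroup. Identifying $P$ with $C_{p^{a}}$ written additively, $\alpha$ is multiplication by some $\lambda$ with $\lambda^{\ell}\equiv 1$ and $\lambda\not\equiv 1\pmod{p^{a}}$; as $\ell\ne p$ this forces $\lambda\not\equiv 1\pmod p$, so $\lambda-1$ is a unit modulo $p^{a}$ and $\alpha$ is fixed-point-free, i.e. $C_{P}(g)=1$. If $x\in P$ normalizes $\langle g\rangle$ then $xgx^{-1}=g^{i}$ for some $i$, and comparing the $P$- and $\langle g\rangle$-components in $P\rtimes\langle g\rangle$ forces $\alpha(x)=x$, hence $x\in C_{P}(g)=1$; thus $N_{K}(\langle g\rangle)\cap P=1$, so $N_{K}(\langle g\rangle)$ embeds in $K/P\cong\langle g\rangle$ and therefore equals $\langle g\rangle$. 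Consequently $K$ has exactly $|K:\langle g\rangle|=p^{a}$ Sylow $\ell$-subgroups, all of them cyclic, giving $p^{a}$ distinct cyclic subgroups of $G$; so $n=c(G)\ge c(K)\ge p^{a}\ge p>n$, a contradiction. This proves that every prime dividing $|G|$ is at most $n$, and with the two cheap bounds above the theorem follows.

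The one step carrying real content is this last one: a surviving large prime $p$ must act, through some small prime $\ell\mid p-1$, on the normal cyclic Sylow $p$-subgroup, and the resulting action is fixed-point-free, which makes the cyclic Sylow $\ell$-subgroup of $K=P\rtimes C_{\ell^{s}}$ self-normalizing and hence endowed with $p^{a}\ge p$ conjugates — far more cyclic subgroups than $c(G)=n<p$ can accommodate. The only delicate bookkeeping is that the element realizing the action is guaranteed only to be an $\ell$-element of some order $\ell^{s}$ (its image in $G/C_{G}(P)$ having order $\ell$), so $K$ is $P\rtimes C_{\ell^{s}}$ rather than necessarily $P\rtimes C_{\ell}$; this leaves the Sylow count untouched, as fixed-point-freeness of $\alpha$ is what is used. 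The remaining ingredients — bounding the number of primes and the Sylow exponents, the normality of $P$ via the congruence $n_{p}\equiv 1\pmod p$, and counting groups of bounded order — are routine.
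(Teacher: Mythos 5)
Your proof is correct, and it reaches the theorem by a genuinely different route from the paper's. The paper deduces Theorem \ref{CB} from a structural statement (Theorem \ref{AxBcyclic}): by induction on $|G|$ one splits off cyclic coprime Sylow direct factors one at a time, and the terminal case --- where the center has no large cyclic Sylow subgroup --- is handled by Pyber's theorem bounding $|G:Z(G)|$ in terms of pairwise non-commuting elements (Proposition \ref{noncommuting}). You instead bound $|G|$ directly and elementarily: the number of prime divisors and all Sylow exponents are at most $c(G)-1$ by easy counting, so everything reduces to excluding a prime $p>c(G)$; such a $p$ would force its Sylow subgroup $P$ to be cyclic and normal, and then $P$ is either central --- whence Burnside (or simply Schur--Zassenhaus, since the action on a central normal subgroup is trivial) produces a forbidden cyclic coprime direct factor --- or acted on fixed-point-freely by some $\ell$-element $g$, whence $\langle g\rangle$ is a self-normalizing cyclic Sylow $\ell$-subgroup of $P\langle g\rangle$ with $p^{a}\geqslant p>c(G)$ conjugates. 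The common kernel of the two arguments is the observation that a cyclic subgroup not normalized by $P$ has at least $p$ conjugates and hence forces $p\leqslant c(G)$ (the paper applies it to some $\langle u\rangle\leqslant K$ in $G=Q\rtimes K$); but your version dispenses with Pyber's theorem entirely and yields the explicit, self-contained bound $|G|\leqslant n^{n(n-1)}$, at the cost of not giving the paper's stronger decomposition $G\cong A\times B$ for an arbitrary group. All the delicate steps check out: the induction giving $a+1$ cyclic subgroups in a group of order $p^{a}$, the passage to the $\ell$-part of $g$, the fixed-point-freeness of an automorphism of $C_{p^{a}}$ of order $\ell\neq p$, and the Sylow count in $P\rtimes\langle g\rangle$ (consistent with $p^{a}\equiv 1\pmod{\ell}$ since $\ell\mid p-1$).
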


Observe that the family of groups without cyclic coprime direct factors is not MCB. As an example of this, consider $G = G_k = C_3 \rtimes C_{2^k}$ with the action given by inversion. Then $G$ is not isomorphic to a direct product of two nontrivial groups and $\lambda(G)=4$. Indeed, writing $G=\langle a, b: a^3=b^{2^k}=1, a^b=a^{-1}\rangle$, since $a^{b^2}=a$, we have $N:=\langle a, b^2\rangle=\langle a\rangle \times \langle b^2\rangle\cong C_{3 \cdot 2^{k-1}}$. The subgroup $N$ is maximal cyclic and $\langle x \rangle$ is maximal cyclic for every $x \in G-N$, since $o(x)=2^k$. Therefore 
\begin{align*}
\lambda(G) & = 1 + |G-N|/\varphi(2^k) = 4, \\
c(G) & = c(N)+c(G-N) = c(C_3) \cdot c(C_{2^{k-1}}) + |G-N|/\varphi(2^k)=2k+3.
\end{align*}
This also shows that the family of groups $\{G_k\ :\ k \geqslant 1\}$ is CB but not MCB.

Let $\mathscr{B}$ be the set of positive integers $n$ such that there are only finitely many finite noncyclic groups with precisely $n$ cyclic subgroups. From the work of Ashrafi and Haghi \cite{AH,HA2018} (see also \cite{Kalra} and \cite{Zhou}) we immediately deduce that $3,4,5,6,7,9 \in \mathscr{B}$ and $8,10 \not \in \mathscr{B}$. These papers have as objective to classify the groups with a given number of cyclic subgroups. It is interesting to observe that, in all known results, there are only finitely many noncyclic groups whose number of cyclic subgroups is a given prime number, in other words it seems that $\mathscr{B}$ contains all the prime numbers. We confirm this in the following theorem.

\begin{thm} \label{setB}
$\mathscr{B} = \{1,4,6,9\} \cup \mathscr{P}$ where $\mathscr{P}$ is the set of prime numbers.
\end{thm}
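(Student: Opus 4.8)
The plan is to deduce this from Theorem \ref{CB} together with an elementary structural reduction and a little number theory. The first step is the decomposition: any finite group $G$ can be written as $G \cong C \times D$ where $C$ is cyclic, $\gcd(|C|,|D|)=1$, and $D$ has no nontrivial cyclic coprime direct factor. One gets this by choosing $C$ to be a cyclic coprime direct factor of $G$ of largest possible order (the trivial subgroup always qualifies); if $D$ had a nontrivial cyclic coprime direct factor $C'$, then $C\times C'$ would be a strictly larger cyclic coprime direct factor of $G$, a contradiction. Since $c$ is multiplicative on coprime direct factors, $c(G)=c(C)\,c(D)$, and $G$ is noncyclic precisely when $D$ is, because a direct factor of a cyclic group is cyclic. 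I will also use the elementary fact that a noncyclic finite group has at least $4$ cyclic subgroups: if $G$ had at most $3$, it would equal the union of its at most two nontrivial cyclic subgroups, and since a group is never the union of two proper subgroups this forces $G$ itself to be one of them, hence cyclic.

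For the inclusion $\{1,4,6,9\}\cup\mathscr{P}\subseteq\mathscr{B}$, let $n\in\{1,4,6,9\}\cup\mathscr{P}$ and let $G$ be noncyclic with $c(G)=n$; then $n\geq 4$, so $n\in\{4,6,9\}\cup\mathscr{P}$. Writing $G\cong C\times D$ as above gives $c(C)\,c(D)=n$ with $c(D)\geq 4$. A glance at the factorizations of $4$, $6$, $9$, and of a prime shows that the only possibility is $c(C)=1$ and $c(D)=n$; that is, $C$ is trivial and $G=D$ has no nontrivial cyclic coprime direct factor. By Theorem \ref{CB} there are only finitely many such $G$ with $c(G)=n$, so $n\in\mathscr{B}$.

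For the reverse inclusion I would prove that if $n$ is composite and $n\notin\{4,6,9\}$ then $n\notin\mathscr{B}$, by producing infinitely many noncyclic groups with exactly $n$ cyclic subgroups. The point is that such an $n$ has a divisor $m$ with $4\leq m\leq n/2$: letting $p$ be the smallest prime divisor of $n$, the divisor $m:=n/p$ satisfies $m\leq n/2$, and since $n\geq p^2$ one checks case by case ($p=2$, $p=3$, $p\geq 5$) that $m\geq 4$ unless $n\in\{4,6,9\}$. Put $d:=n/m\geq 2$. Since $m\geq 4$, the construction recalled in the introduction yields a noncyclic group $K$ with $c(K)=m$ (write $m-2=ab$ with $a$ prime and $b\geq 1$ and take $K=C_a\times C_{a^b}$). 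Then for every prime $q$ not dividing $|K|$ the group $G_q:=C_{q^{d-1}}\times K$ is noncyclic (since $K$ is) and satisfies $c(G_q)=c(C_{q^{d-1}})\,c(K)=d\cdot m=n$, and since $|G_q|=q^{d-1}|K|$ these groups are pairwise non-isomorphic; hence there are infinitely many of them. Combining the two inclusions gives $\mathscr{B}=\{1,4,6,9\}\cup\mathscr{P}$.

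Given Theorem \ref{CB}, I do not expect a serious obstacle here; the one delicate point is the number-theoretic bookkeeping in the last paragraph — checking that $m=n/p\in[4,n/2]$ for every composite $n\notin\{4,6,9\}$ — which is precisely the computation that forces $\{1,4,6,9\}$ (together with the primes) to be the answer.
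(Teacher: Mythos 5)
Your proof is correct, and the core of it coincides with the paper's: the negative direction (infinitely many noncyclic groups with $n$ cyclic subgroups for composite $n \notin \{4,6,9\}$) uses exactly the paper's construction $C_{q^{d-1}} \times C_a \times C_{a^b}$, and the positive direction rests on splitting off a cyclic coprime direct factor and applying the finiteness result for groups with no such factor. The differences are worth noting. First, the paper invokes Theorem \ref{AxBcyclic} directly, which already packages the decomposition $G \cong A \times B$ with explicit bounds on $|B|$; you instead rederive the decomposition from scratch by taking a cyclic coprime direct factor of maximal order and then apply Theorem \ref{CB} to the complement. These are equivalent (Theorem \ref{CB} is itself a corollary of Theorem \ref{AxBcyclic}), but your maximality argument is a clean, self-contained way to get what you need from the weaker-looking statement. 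Second, and more substantively, the paper handles $n \leqslant 10$ by citing the Ashrafi--Haghi classifications ($3,4,5,6,7,9 \in \mathscr{B}$, $8,10 \notin \mathscr{B}$) and only runs its own argument for $n \geqslant 10$; you avoid the citation entirely by observing that a noncyclic group has at least $4$ cyclic subgroups (since no group is the union of two proper subgroups) and then reading off the admissible factorizations $c(C)\,c(D) = n$ for $n \in \{4,6,9\} \cup \mathscr{P}$. This makes your proof uniform over all $n$ and independent of the prior classification literature, at the cost of a small amount of extra case analysis; your number-theoretic check that $n/p \in [4, n/2]$ for composite $n \notin \{4,6,9\}$ is correct and is precisely what isolates the exceptional set.
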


\section{Proof of Theorem \ref{solbound}}

Let $\mathscr{C}$ be the family of all maximal cyclic subgroups of $G$, so that $|\mathscr{C}|=\lambda=\lambda(G)$. Of course $G$ acts on $\mathscr{C}$ by conjugation. The kernel $N$ of this action is the intersection of the normalizers of all maximal cyclic subgroups of $G$, therefore $G/N$ is isomorphic to a subgroup of $\Sym(\lambda)$.

We claim that $N$ is actually equal to the set of elements of $G$ that normalize every subgroup of $G$. To see this, let $g \in G$ be an element that normalizes every maximal cyclic subgroup of $G$. If $H$ is any subgroup of $G$ and $h \in H$, then there exists a maximal cyclic subgroup $\langle x \rangle$ of $G$ containing $\langle h \rangle$, and by assumption $x^g = x^k$ for some integer $k$. Since $h \in \langle x \rangle$, there is some integer $t$ such that $h=x^t$, so that
$$h^g = (x^t)^g = (x^g)^t = (x^k)^t = (x^t)^k = h^k \in \langle h \rangle \leqslant H,$$
this proves that $g$ normalizes $H$. This proves the claim.

The intersection of the normalizers of the subgroups of $G$ has been studied by Schenkman in  \cite{Schenkman}. Using their results, we deduce that $N$ is contained in the second term of the upper central series, $N \leqslant Z_2(G)$, so $N$ is nilpotent of class at most $2$. In particular $N''=\{1\}$.
By \cite[Theorem 1]{Dixon}, the derived length of a solvable subgroup of $\Sym(\lambda)$ is at most $\frac{5}{2} \log_3(\lambda)$. Since $N$ is metabelian and $G/N$ is isomorphic to a subgroup of $\Sym(\lambda)$, we deduce that, if $G$ is solvable, then the derived length of $G$ is at most $2+\frac{5}{2} \log_3(\lambda)$. This proves Theorem \ref{solbound}.

\section{Proof of Theorem \ref{MCB}}

In this section, we prove Theorem \ref{MCB}. First, we need some preliminary results.

\begin{prop}\label{subquo}
If $H \leqslant G$ and $N \unlhd G$ then $\lambda(H) \leqslant \lambda(G)$ and $\lambda(G/N) \leqslant \lambda(G)$.
\end{prop}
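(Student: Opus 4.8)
The plan is to produce, in each of the two cases, an explicit map between the relevant sets of maximal cyclic subgroups and to verify that it is injective (for the subgroup statement) or surjective (for the quotient statement). Recall that $\lambda$ counts maximal cyclic subgroups, and that, as already noted, finiteness guarantees that every cyclic subgroup of a finite group is contained in some maximal cyclic subgroup; I will use this repeatedly, together with the elementary facts that subgroups and quotients of cyclic groups are cyclic.

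First I would treat $H \leqslant G$. For each maximal cyclic subgroup $C$ of $H$, choose (arbitrarily) a maximal cyclic subgroup $f(C)$ of $G$ with $C \leqslant f(C)$. The key point is that $f$ is injective regardless of how the choices are made: if $f(C_1) = f(C_2) =: D$, then $C_1, C_2 \leqslant D \cap H$, and $D \cap H$ is a cyclic subgroup of $H$ since it is a subgroup of the cyclic group $D$; maximality of $C_1$ and of $C_2$ among the cyclic subgroups of $H$ then forces $C_1 = D \cap H = C_2$. Hence the number of maximal cyclic subgroups of $H$ is at most that of $G$, i.e. $\lambda(H) \leqslant \lambda(G)$. (If $H$ is cyclic this just says $1 \leqslant \lambda(G)$, consistent with the convention $\lambda(H)=1$.)

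Next I would treat $G/N$, going in the opposite direction. For each maximal cyclic subgroup $D$ of $G$, the image $DN/N$ is a cyclic subgroup of $G/N$, so I may choose a maximal cyclic subgroup $\psi(D)$ of $G/N$ containing $DN/N$. It remains to show that $\psi$ is \emph{surjective}. Given a maximal cyclic subgroup $\overline{C} = \langle xN \rangle$ of $G/N$, pick any $g \in G$ with $gN = xN$ and enlarge $\langle g \rangle$ to a maximal cyclic subgroup $D$ of $G$. Then $DN/N$ is cyclic and contains $\langle gN \rangle = \overline{C}$, so maximality of $\overline{C}$ gives $DN/N = \overline{C}$, whence $\psi(D) = \overline{C}$. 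Surjectivity of $\psi$ yields $\lambda(G/N) \leqslant \lambda(G)$.

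I do not anticipate a real obstacle here: the argument is purely formal once one has the right two maps. The only things to be careful about are that the maps are well defined (which reduces to the finiteness remark above and to the stability of cyclicity under passing to subgroups and quotients) and, more conceptually, the mild asymmetry between the two halves — for $H \leqslant G$ one gets an injection $\{\text{max.\ cyclic in }H\} \hookrightarrow \{\text{max.\ cyclic in }G\}$, whereas for $G/N$ one gets a surjection $\{\text{max.\ cyclic in }G\} \twoheadrightarrow \{\text{max.\ cyclic in }G/N\}$. That asymmetry is the one structural point worth flagging; everything else is routine.
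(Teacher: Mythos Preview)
Your proof is correct. For the subgroup half you and the paper take the same route---an injection from the maximal cyclic subgroups of $H$ to those of $G$---though your injectivity check via the single cyclic subgroup $D \cap H$ is slicker than the paper's explicit B\'ezout/GCD computation with exponents (which in effect reconstructs $D \cap H$ by hand). For the quotient half the approaches genuinely diverge: the paper invokes the characterization of $\lambda$ as the maximal size of an irredundant covering, noting that the preimage of an irredundant cover of $G/N$ is an irredundant cover of $G$ of the same size (handling the cyclic case $\lambda(G/N)=1$ separately). Your direct surjection from the maximal cyclic subgroups of $G$ onto those of $G/N$ has the advantage of being self-contained, uniform with the subgroup half, and independent of the covering interpretation; the paper's argument is a one-liner once that interpretation is available and keeps the link to coverings visible.
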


\begin{proof}
Let $\langle h_i \rangle$, $i=1,\ldots,n$, be the maximal cyclic subgroups of $H$, where $n=\lambda(H)$, and let $\langle x_i \rangle$, $i=1,\ldots,n$, be maximal cyclic subgroups of $G$ such that $\langle h_i \rangle \leqslant \langle x_i \rangle$ for all $i$. To prove that $\lambda(H) \leqslant \lambda(G)$ it is enough to prove that, if $\langle x_i \rangle = \langle x_j \rangle$, then $\langle h_i \rangle = \langle h_j \rangle$. We can write $h_i = x^a$ and $h_j = x^b$ for some positive integer $a,b$, where $x=x_i$. Let $d$ be the greatest common divisor of $a,b$, then we can write $dr=a$ and $ds=b$ with $r,s \in \mathbb{N}$. Moreover $ua+vb=d$ for some integer $u,v$ and hence $y = x^d = (x^a)^u (x^b)^v \in H$. Since $y^r=h_i$ and $y^s=h_j$, we deduce that both $\langle h_i \rangle$ and $\langle h_j \rangle$ are contained in $\langle y \rangle$. Since they are maximal cyclic subgroups of $H$ and $y \in H$, we deduce that they are both equal to $\langle y \rangle$. Therefore $\langle h_i \rangle = \langle y \rangle = \langle h_j \rangle$.

Let $N \unlhd G$. We need to prove that $\lambda(G/N) \leqslant \lambda(G)$. If $G/N$ is cyclic then $\lambda(G/N)=1$ and there is nothing to prove, so now assume that $G/N$ is noncyclic, so that $G$ is noncyclic as well. If $\{H_1/N,\ldots,H_k/N\}$ is an irredundant covering of $G/N$ of size $k=\lambda(G/N)$, then $\{H_1,\ldots,H_k\}$ is an irredundant covering of $G$ of size $k$. Therefore $\lambda(G/N) \leqslant \lambda(G)$.
\end{proof}

\begin{lemma} \label{cyclicfactor}
Let $p$ be a prime number, let $A,B$ be nontrivial finite $p$-groups with $A$ cyclic and let $G:=A \times B$. Then $A \times \{1\}$ is a maximal cyclic subgroup of $G$.
\end{lemma}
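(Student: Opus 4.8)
The plan is to show directly that $A \times \{1\}$ is not properly contained in any cyclic subgroup of $G = A \times B$. Suppose for contradiction that $A \times \{1\} < \langle g \rangle$ for some $g = (a,b) \in G$, where necessarily the containment is proper. Write $|A| = p^{\alpha}$ with $\alpha \geqslant 1$, so $A \times \{1\}$ is cyclic of order $p^{\alpha}$. Since $A \times \{1\}$ has index $|B| = p^{\beta}$ in $G$ (with $\beta \geqslant 1$) and since $\langle g \rangle$ lies strictly between, the order of $g$ is $p^{\gamma}$ with $\alpha < \gamma \leqslant \alpha + \beta$; in particular $o(g) = o(a,b) = \mathrm{lcm}(o(a),o(b))$ is a power of $p$ strictly exceeding $p^{\alpha} = |A| \geqslant o(a)$. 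Hence $o(b) = p^{\gamma} > o(a)$, so in fact $o(g) = o(b)$.

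Next I would use the fact that $\langle g \rangle$ contains $A \times \{1\}$. Projecting onto the second coordinate, $\langle g \rangle$ projects onto $\langle b \rangle$, a cyclic group of order $p^{\gamma}$; but the projection of $\langle g \rangle$ to $B$ is an isomorphism when restricted to $\langle g \rangle$ precisely when $\langle g \rangle \cap (A \times \{1\}) = \{1\}$. Since instead $A \times \{1\} \leqslant \langle g \rangle$ is nontrivial, the kernel of this projection restricted to $\langle g \rangle$ is nontrivial, so $|\langle g \rangle| > |\langle b \rangle|$, i.e. $p^{\gamma} = o(g) > o(b) = p^{\gamma}$, a contradiction. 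This is the crux of the argument, and it is short; the only thing to be careful about is the bookkeeping of orders in a direct product of $p$-groups, namely that $o((a,b)) = \mathrm{lcm}(o(a), o(b))$ and that one of $o(a)$, $o(b)$ attains this lcm.

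Alternatively, and perhaps more cleanly, one can argue as follows: if $A \times \{1\} \leqslant \langle (a,b)\rangle$, then $(a,1) = (a,b)^k$ for some $k$, so $b^k = 1$, which forces $o(b) \mid k$; but then $(a,b)^k = (a^k, 1)$ already has trivial second coordinate, so $\langle (a,b)^k \rangle \leqslant A \times \{1\}$, while also $A \times \{1\} \leqslant \langle (a,b)^k \rangle$ would follow if $k$ is chosen minimal with $(a,b)^k \in A \times \{1\}$ and $(a,b)^k$ generates the image — here one checks that $\langle (a,b)\rangle \cap (A \times \{1\}) = \langle (a,b)^{o(b)} \rangle = \langle (a^{o(b)}, 1)\rangle$. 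Thus $A \times \{1\} = \langle (a^{o(b)},1)\rangle$, which has order $o(a)/\gcd(o(a), o(b))$. For this to equal $|A| = o(a)$ we need $\gcd(o(a), o(b)) = 1$; since both are powers of the same prime $p$ and $B \neq \{1\}$, this is impossible. Hence $A \times \{1\}$ is not properly contained in any cyclic subgroup, and being itself cyclic, it is a maximal cyclic subgroup of $G$.

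I expect the main (minor) obstacle to be stating the intersection $\langle (a,b) \rangle \cap (A \times \{1\})$ correctly; once that is pinned down, the coprimality-of-$p$-power-orders contradiction closes the argument immediately, and this is exactly where the hypothesis that $A$ and $B$ are $p$-groups for the \emph{same} prime $p$ (rather than coprime) is used.
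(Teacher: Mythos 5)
Your proof is correct and follows essentially the same elementary route as the paper's: both reduce to the observation that forcing $A \times \{1\}$ properly inside $\langle (a,b) \rangle$ would make a nontrivial power of $p$ coprime to another nontrivial power of $p$. The only nitpick is in your alternative argument, where the final step needs $b \neq 1$ (which follows from properness of the containment, since $b=1$ gives $\langle (a,b) \rangle \leqslant A \times \{1\}$) rather than merely $B \neq \{1\}$; your first argument, via the kernel of the projection to $B$, is airtight as written.
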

\begin{proof}
If this is not true, then writing $A=\langle a \rangle$, there exists an element $(a^i,b) \in G$ such that $\langle (a,1) \rangle$ is properly contained in $\langle (a^i,b) \rangle$. This means that $b \neq 1$ and there exists an integer $m$ such that $a^{im}=a$, $b^m=1$. The fact that $a^{im}=a$ implies that $o(a)$, which is a power of $p$ different from $1$ (as $A$ is a nontrivial $p$-group), divides $im-1$, implying that $p$ does not divide $m$. Since $b^m=1$ and $B$ is a $p$-group, we deduce that $b=1$, a contradiction.
\end{proof}

\begin{lemma} \label{lambda-abelian}
Let $p$ be a prime number, let $P$ be a finite nontrivial $p$-group and let $c$ be the number of cyclic subgroups of $P$, let $\lambda$ be the number of maximal cyclic subgroups of $P$. If $G= P\times C_p$, then
$$\begin{array}{l}
c(G) = p(c-1)+2, \\
\lambda(G) = (p-1)(c-1)+\lambda+1.
\end{array}$$
In particular, $c(C_{p^a} \times C_p) = ap+2$ and  $\lambda(C_{p^a} \times C_p) = ap-a+2$.
\end{lemma}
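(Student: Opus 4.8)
The plan is to prove both identities by a direct computation with the Euler-type function $c$ of (\ref{formulaEuler}). Write $C_p=\langle z\rangle$ and split $G=P\times C_p$ into the vertical part $P\times\{1\}$ and its complement $Y:=\{(x,z^i)\ :\ x\in P,\ 1\leqslant i\leqslant p-1\}$, so that $G=(P\times\{1\})\sqcup Y$.

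First I would compute $c(G)$. Since $c$ is just the sum of the quantities $1/\varphi(o(g))$ over $g\in G$, it is additive along the partition $G=(P\times\{1\})\sqcup Y$, so $c(G)=c(P\times\{1\})+c(Y)=c+c(Y)$. For $(x,z^i)\in Y$ we have $o((x,z^i))=\operatorname{lcm}(o(x),p)$, which equals $o(x)$ when $x\neq 1$ (then $o(x)$ is a positive power of $p$) and equals $p$ when $x=1$. Hence $c(Y)=(p-1)\sum_{x\in P}1/\varphi(\operatorname{lcm}(o(x),p))$, and isolating the term $x=1$ gives $\sum_{x\in P}1/\varphi(\operatorname{lcm}(o(x),p))=\frac{1}{p-1}+(c(P)-1)=\frac{1}{p-1}+c-1$. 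Therefore $c(Y)=1+(p-1)(c-1)$ and $c(G)=c+1+(p-1)(c-1)=p(c-1)+2$.

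For $\lambda(G)$ I would invoke the identity recorded in the introduction: for a nontrivial finite $p$-group $K$ one has $\lambda(K)=c(K)-c(K^p)$, where $K^p=\{g^p\ :\ g\in K\}$. Applied to $K=G$, and using that $G^p=(P\times C_p)^p=P^p\times\{1\}$ because $z^p=1$, this gives $\lambda(G)=c(G)-c(P^p)$. Applying the same identity to $K=P$ yields $c(P^p)=c(P)-\lambda(P)=c-\lambda$, so substituting the value of $c(G)$ already found we obtain $\lambda(G)=p(c-1)+2-(c-\lambda)=(p-1)(c-1)+\lambda+1$. The final assertion then follows by taking $P=C_{p^a}$, for which $c(C_{p^a})=a+1$ (one cyclic subgroup of each order $1,p,\dots,p^a$) and $\lambda(C_{p^a})=1$, giving $c(C_{p^a}\times C_p)=pa+2$ and $\lambda(C_{p^a}\times C_p)=(p-1)a+2=pa-a+2$.

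There is no serious obstacle here; the computation is elementary once the setup is in place. The only points that need care are the boundary case $x=1$ in the order computation (where $\operatorname{lcm}(o(x),p)=p\neq o(x)$), the verification that $G^p=P^p\times\{1\}$, and ensuring the $p$-group identity $\lambda=c-c(\,\cdot^{\,p})$ is applied only to nontrivial $p$-groups — which holds both for $P$ and for $G=P\times C_p$.
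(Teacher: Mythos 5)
Your proof is correct and follows essentially the same route as the paper: a direct order-by-order computation of $c(G)$, and then the observation that the $p$-th powers of elements of $G$ all land in $P\times\{1\}$, so that the non-maximal cyclic subgroups of $G$ are exactly the $c-\lambda$ non-maximal cyclic subgroups of $P$. Your packaging of the second step via the identity $\lambda(K)=c(K)-c(K^p)$ from the introduction (together with $G^p=P^p\times\{1\}$) is just a tidier phrasing of the paper's containment argument, and your care about applying that identity only to nontrivial $p$-groups is well placed.
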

\begin{proof}
For every $b \in \mathbb{N}$, let $c_b$ be the number of elements of order $p^b$ in $P$. If $b \geqslant 2$, then $G$ has $p \cdot c_b$ elements of order $p^b$. Moreover, $G$ has $p \cdot c_1+p-1$ elements of order $p$ and of course one element of order $1$. Write $|P|=p^a$. It follows that
$$c(G) = 1+\frac{p \cdot c_1+p-1}{\varphi(p)} + \sum_{b=2}^a \frac{p \cdot c_b}{\varphi(p^b)} = p \cdot \sum_{b=1}^a \frac{c_b}{\varphi(p^b)} +2 = p(c-1)+2.$$
Now, the $c-\lambda$ non-maximal cyclic subgroups of $P$ are not maximal cyclic subgroups of $G$, therefore
$$\lambda(G) = p(c-1)+2-(c-\lambda) = (p-1)c-p+2+\lambda.$$
This is because, if $K=\langle x \rangle$ is any cyclic subgroup of $G$, then $x^p \in P \times \{1\}$, therefore all the subgroups of $G$ properly contained in $K$ are subgroups of $P \times \{1\}$. Therefore all the cyclic subgroups of $G$ not contained in $P \times \{1\}$ are maximal cyclic.
\end{proof}

Let $n$ be the maximal size of a set of pairwise non-commuting elements of a group $G$. We claim that $n \leqslant \lambda(G)$. To see this, let $S=\{x_1,\ldots,x_n\}$ be a set of pairwise non-commuting elements of maximal size. Then, $G$ has an element $y_i$ such that $x_i \in \langle y_i \rangle$, and $\langle y_i \rangle$ is a maximal cyclic subgroup of $G$ for all $i$. We claim that the $\langle y_i \rangle$'s are pairwise distinct. Indeed, assume that $C = \langle y_i \rangle = \langle y_j \rangle$ for some $i \neq j$. Then the two distinct elements $x_i,x_j$ belong to the same cyclic subgroup $C$, hence they commute, a contradiction.

L. Pyber in \cite{Pyber} proved that $|G:Z(G)| \leqslant \alpha^n$ for some absolute constant $c$, so that
$$|G:Z(G)| \leqslant \alpha^n \leqslant \alpha^{\lambda(G)}.$$In other words, we have the following.
\begin{prop} \label{noncommuting}
There exists a constant $\alpha$ such that $|G:Z(G)| \leqslant \alpha^{\lambda(G)}$ for every finite group $G$.
\end{prop}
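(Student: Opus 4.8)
The plan is to bound $|G:Z(G)|$ indirectly, through a combinatorial invariant that is already known to control the index of the center. Let $n=n(G)$ denote the maximal size of a set of pairwise non-commuting elements of $G$. The key observation, which I would establish first, is the inequality $n \leqslant \lambda(G)$. This is the genuinely new step; once it is in place, the result follows by invoking an existing theorem.

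To prove $n \leqslant \lambda(G)$, I would fix a set $S=\{x_1,\ldots,x_n\}$ of pairwise non-commuting elements of maximal size. Since $G$ is finite, every cyclic subgroup is contained in a maximal cyclic one, so for each $i$ I can choose a maximal cyclic subgroup $\langle y_i \rangle$ with $x_i \in \langle y_i \rangle$. I then claim that the subgroups $\langle y_1 \rangle,\ldots,\langle y_n \rangle$ are pairwise distinct: if $\langle y_i \rangle = \langle y_j \rangle$ for some $i \neq j$, then $x_i$ and $x_j$ would lie in a common cyclic, hence abelian, subgroup and would therefore commute, contradicting the choice of $S$. Distinctness of these $n$ maximal cyclic subgroups immediately yields $n \leqslant \lambda(G)$.

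For the second ingredient I would appeal to Pyber's theorem, which asserts that there is an absolute constant $\alpha$ with $|G:Z(G)| \leqslant \alpha^{n}$ for every finite group $G$. Chaining the two bounds gives $|G:Z(G)| \leqslant \alpha^{n} \leqslant \alpha^{\lambda(G)}$, which is exactly the assertion of the proposition.

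The main obstacle is not a difficulty internal to the argument but a matter of correctly identifying and citing the external input: the entire depth of the statement is carried by Pyber's bound on $|G:Z(G)|$ in terms of the maximal number of pairwise non-commuting elements. The contribution of the present proposition is thus the elementary and essentially self-contained translation step $n \leqslant \lambda(G)$, which converts Pyber's theorem into a bound phrased in terms of maximal cyclic subgroups; one should take care that the constant $\alpha$ is used consistently and that the notation for the non-commuting invariant matches the cited source.
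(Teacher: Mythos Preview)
Your proposal is correct and follows essentially the same argument as the paper: first show $n \leqslant \lambda(G)$ by assigning to each element of a maximal pairwise non-commuting set a maximal cyclic subgroup containing it and observing these are distinct, then invoke Pyber's bound $|G:Z(G)| \leqslant \alpha^{n}$. The paper's proof is identical in structure and in every detail.
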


We now prove Theorem \ref{MCB}, i.e. the fact that the family of finite noncyclic groups of prime power order is MCB. More precisely, we prove that there exists a constant $c$ such that, if $G$ is a noncyclic finite $p$-group and $t=\lambda(G)$, then $|G| \leqslant c^t \cdot t^{t^2}$.

Let $G$ be a noncyclic finite $p$-group, where $p$ is a fixed prime number. If $G$ is abelian then we can write $G = \prod_{i=1}^k C_{p^{a_i}} = \prod_{i=1}^k \langle g_i \rangle$ with $k \geqslant 2$. Since the $\langle g_i \rangle$ are maximal cyclic subgroups of $G$ by Lemma \ref{cyclicfactor}, we have $k \leqslant \lambda(G)$, so we are left to bound the $a_i$'s. Since $\lambda(H) \leqslant \lambda(G)$ for $H \leqslant G$ by Proposition \ref{subquo}, we may assume that $G=C_{p^a} \times C_p = \langle x \rangle \times \langle y \rangle$ hence, by Lemma \ref{lambda-abelian}, $\lambda(G)=ap-a+2 \geqslant a$ and similarly $\lambda(G) \geqslant p$, therefore $|G| \leqslant t^{t^2}$ where $t=\lambda(G)$.

Now assume $G$ is nonabelian. By Proposition \ref{noncommuting}, $|G:Z(G)|$ is bounded by a function of $\lambda(G)$.
If $H$ is a noncyclic abelian subgroup of $G$ then $K = \langle H,Z(G) \rangle$ is a noncyclic abelian subgroup of $G$ hence, by the abelian case, $|K|$ is bounded in terms of $\lambda(K) \leqslant \lambda(G)$, so since $Z(G)$ is contained in $K$, its order is bounded in terms of $\lambda(G)$. We are left to analyze the case in which all abelian subgroups of $G$ are cyclic. The class of $p$-groups all of whose abelian subgroups are cyclic is known: such a group is either cyclic or a generalized quaternion group (see \cite[Theorem 4.10 of Chapter 5]{Gorenstein}). Since $G$ is noncyclic, it must be a generalized quaternion group $Q_{2^n}$, therefore $|Z(G)|=2$ and we are done.

\section{Proof of Theorem \ref{centerless}}

Before proving Theorem \ref{centerless}, we need a lemma. Recall that an element $g \in G$ is called primitive if $\langle g \rangle$ is a maximal cyclic subgroup of $G$.

\begin{lemma} \label{primcoprime}
Let $A,B$ be groups of coprime orders and let $G=A \times B$. An element $(a,b) \in G$ is primitive in $G$ if and only if $a$ is primitive in $A$ and $b$ is primitive in $B$. As a consequence, $\lambda(G) = \lambda(A) \cdot \lambda(B)$.
\end{lemma}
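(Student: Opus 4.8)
The plan is to prove the two implications separately, with the "as a consequence" part following by the same counting argument ($c(P)$-style) used earlier for $c$ and $\lambda$ of coprime products. Throughout, write $A$ and $B$ for the two coprime factors, $|A|=m$, $|B|=n$, $\gcd(m,n)=1$, and recall the standard fact that the cyclic subgroups of $A\times B$ are precisely the subgroups $C_1\times C_2$ with $C_1\leqslant A$, $C_2\leqslant B$ cyclic (this uses $\gcd(m,n)=1$, via the Chinese Remainder Theorem: $\langle(a,b)\rangle=\langle a\rangle\times\langle b\rangle$ whenever $o(a)$ and $o(b)$ are coprime).

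\textbf{The easy direction.} Suppose $(a,b)$ is primitive in $G=A\times B$, i.e. $\langle(a,b)\rangle=\langle a\rangle\times\langle b\rangle$ is a maximal cyclic subgroup of $G$. If $a$ were not primitive in $A$, there would be a cyclic subgroup $\langle a'\rangle\leqslant A$ properly containing $\langle a\rangle$; then $\langle a'\rangle\times\langle b\rangle$ is a cyclic subgroup of $G$ (again since orders stay coprime) properly containing $\langle a\rangle\times\langle b\rangle$, contradicting maximality. The same argument handles $b$. So primitivity of $(a,b)$ forces primitivity of each coordinate.

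\textbf{The converse.} Suppose $a$ is primitive in $A$ and $b$ is primitive in $B$; I claim $\langle a\rangle\times\langle b\rangle$ is maximal cyclic in $G$. Take any cyclic $\langle(a',b')\rangle\leqslant G$ containing $\langle a\rangle\times\langle b\rangle$. Since $\langle(a',b')\rangle=\langle a'\rangle\times\langle b'\rangle$ (orders of $a'$, $b'$ divide $m$, $n$ respectively, hence are coprime), the containment gives $\langle a\rangle\leqslant\langle a'\rangle$ and $\langle b\rangle\leqslant\langle b'\rangle$. Maximality of $\langle a\rangle$ in $A$ and of $\langle b\rangle$ in $B$ forces $\langle a\rangle=\langle a'\rangle$ and $\langle b\rangle=\langle b'\rangle$, so the original subgroup was already equal to $\langle a\rangle\times\langle b\rangle$. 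Hence $(a,b)$ is primitive in $G$.

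\textbf{The consequence.} Let $P_A$, $P_B$, $P_G$ be the sets of primitive elements of $A$, $B$, $G$. The equivalence just proved says $P_G=P_A\times P_B$. Since $\lambda(X)=c(P_X)$ for any finite $X$ and $c$ is multiplicative on direct products of coprime order (by the Euler-function formula \eqref{formulaEuler}, as $\varphi$ is multiplicative and $o((a,b))=o(a)o(b)$ here), we get
\[
\lambda(G)=c(P_G)=c(P_A\times P_B)=c(P_A)\cdot c(P_B)=\lambda(A)\cdot\lambda(B).
\]
\textbf{The only mild subtlety} is making sure the decomposition $\langle(a,b)\rangle=\langle a\rangle\times\langle b\rangle$ and its behaviour under passing to overgroups is invoked cleanly each time — that coprimality is what both directions rest on, and it is exactly where a non-coprime counterexample (e.g. $C_2\times C_2$) would break the statement.
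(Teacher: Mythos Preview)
Your proof is correct. It differs from the paper's in a meaningful way: the paper argues entirely at the level of elements and powers, using the alternative characterization of primitivity (``$g$ is primitive if whenever $g=y^k$ then $y$ is a power of $g$'') together with an explicit Chinese Remainder argument to combine exponents in the two factors. You instead work at the level of subgroups, invoking once the structural fact $\langle(a,b)\rangle=\langle a\rangle\times\langle b\rangle$ (valid by coprimality), which immediately reduces both directions to comparing the cyclic-subgroup lattices of $A$ and $B$ separately. Your route is shorter and more conceptual, at the cost of importing that decomposition lemma; the paper's route is more self-contained but requires some bookkeeping with gcd's and the CRT. The derivation of $\lambda(G)=\lambda(A)\lambda(B)$ via $P_G=P_A\times P_B$ and the multiplicativity of $c$ over coprime products is fine and matches what the paper states (though the paper leaves this last step to the reader).
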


\begin{proof}
Assume $(a,b) \in G$ is primitive. We prove that $a$ is primitive. Suppose $a=x^k$ for some $x \in A$, $k \in \mathbb{Z}$. We need to prove that $x$ is a power of $a$. Let $d$ be the GCD between $k$ and $|B|$. Since $k/d$ is coprime to $|B|$, we can write $b=y^{k/d}$ for a suitable $y \in \langle b \rangle$, hence $(a,b)=(x^d,y)^{k/d}$. Since $(a,b)$ is primitive, there exists $r \in \mathbb{Z}$ such that $(x^d,y)=(a,b)^r=(a^r,b^r)$, hence $x^d=a^r$. Since $d$ is coprime to $|A|$, this implies that $x$ is a power of $a$. This proves that $a$ is primitive and, similarly, $b$ is primitive too. Conversely, assume $a$ is primitive in $A$ and $b$ is primitive in $B$. Assume $(a,b) = (x,y)^k = (x^k,y^k)$, then $a=x^k$ and $b=y^k$. Since $a,b$ are primitive, there exist $l,m \in \mathbb{Z}$ such that $x=a^l$ and $y=b^m$. By the Chinese Remainder Theorem, there exists $r \in \mathbb{Z}$ such that $r \equiv l \mod |A|$ and $r \equiv m \mod |B|$, therefore $(a,b)^r = (a^r,b^r) = (a^l,b^m) = (x,y)$. This proves that $(a,b)$ is primitive.
\end{proof}

We now prove Theorem \ref{centerless}. Let $G$ be a finite group such that every nontrivial Sylow subgroup of $Z(G)$ is noncyclic. 
Write $Z(G)=P_1\times P_2\times \ldots\times P_k$ where $P_i$ is a nontrivial Sylow subgroup of $Z(G)$ for $i=1,2,\ldots, k$. Let $z:=\min\{\lambda(P_1), \lambda(P_2),\ldots, \lambda(P_k)\}$. Since $P_i$ is noncyclic for all $i$, we have $z \geqslant 2$. By Proposition \ref{subquo}, 
$$z^k \leqslant  \lambda(P_1) \cdot \ldots \cdot \lambda(P_k) = \lambda(Z(G)) \leqslant \lambda(G),$$ 
it follows that $k\leqslant \log_z(\lambda(G))$. By Theorem \ref{MCB}, the family of noncyclic groups of prime power order is MCB, so by Proposition \ref{subquo} we can bound the order of $P_i$ in terms of $\lambda(G)$, thus $|Z(G)|$ is bounded by a function of $\lambda(G)$. Therefore $|G|=|G:Z(G)| \cdot |Z(G)|$ is also bounded by a function of $\lambda(G)$ by Proposition \ref{noncommuting}.

\section{Proof of Theorem \ref{CB}}

Theorem \ref{MCB} implies that the family of noncyclic groups of prime power order is CB. More precisely, we have the following.

\begin{prop}\label{pgroup}
If $G$ is a noncyclic finite $p$-group and $t = c(G)$ then $|G|\leqslant t^t$.
\end{prop}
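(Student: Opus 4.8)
The plan is to reduce the bound on $|G|$ to bounds on just two quantities: the prime $p$ and the exponent of $G$. I would set up notation as follows. For $i\geqslant 1$ let $c_i$ be the number of cyclic subgroups of $G$ of order $p^i$, and put $c_0=1$; if $p^e$ denotes the exponent of $G$, then $c_i\geqslant 1$ for $0\leqslant i\leqslant e$ and $c_i=0$ for $i>e$, so that
\[
t=c(G)=\sum_{i=0}^{e}c_i \qquad\text{and}\qquad |G|=1+\sum_{i=1}^{e}\varphi(p^i)\,c_i,
\]
the second equality because $G$ has exactly $\varphi(p^i)\,c_i$ elements of order $p^i$.

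From the first displayed equality I immediately get $e\leqslant t-1$, since $c_1,\ldots,c_e$ are all at least $1$. The heart of the argument is to show that $p\leqslant t-1$ as well. Here I would use the classical congruence stating that the number $c_1$ of subgroups of order $p$ in a $p$-group is $\equiv 1\pmod p$; thus either $c_1\geqslant p+1$, in which case $p\leqslant c_1-1\leqslant (c(G)-c_0)-1=t-2$, or $c_1=1$. In the remaining case $G$ is a noncyclic $p$-group with a unique subgroup of order $p$; since an abelian $p$-group with a unique subgroup of order $p$ is cyclic, every abelian subgroup of $G$ is then cyclic, so $G$ is a generalized quaternion group by \cite[Theorem 4.10 of Chapter 5]{Gorenstein}, which forces $p=2$. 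As a noncyclic group has at least $4$ cyclic subgroups (no group is the union of two proper subgroups), we have $t\geqslant 4$, so again $p=2\leqslant t-1$.

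It then remains to assemble the estimate. Using $\varphi(p^i)=p^{i-1}(p-1)\leqslant p^{e-1}(p-1)\leqslant p^{e}-1$ for $1\leqslant i\leqslant e$, I would bound
\[
|G|=1+\sum_{i=1}^{e}\varphi(p^i)\,c_i\leqslant 1+(p^{e}-1)\sum_{i=1}^{e}c_i=1+(p^{e}-1)(t-1)\leqslant (t-1)\,p^{e},
\]
where the last step uses $t\geqslant 2$. Finally $p\leqslant t-1$ and $e\leqslant t-1$ give $p^{e}\leqslant (t-1)^{t-1}$, hence $|G|\leqslant (t-1)^{t}\leqslant t^{t}$. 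I expect the only genuinely delicate point to be the bound $p\leqslant t-1$: one has to exclude a $p$-group with a single subgroup of order $p$ but large $p$, and this is exactly where the structure theorem for $p$-groups with a unique subgroup of order $p$ (equivalently, with all abelian subgroups cyclic) enters. Everything else is a routine counting argument.
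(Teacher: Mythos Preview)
Your proof is correct and follows essentially the same route as the paper's: both bound the exponent via $e\leqslant t-1$, bound the prime $p$ by invoking \cite[Theorem 4.10 of Chapter 5]{Gorenstein} to handle the exceptional case (the paper phrases this as ``$G$ contains $C_p\times C_p$ when $p$ is odd'', you phrase it via $c_1\equiv 1\pmod p$ and the unique-minimal-subgroup classification), and then finish with an element count. Your final estimate $|G|\leqslant 1+(p^e-1)(t-1)\leqslant (t-1)p^e$ is a mild repackaging of the paper's inequality $t\geqslant (p^n-1)/\varphi(p^m)\geqslant p^{n-m}$, and even yields the slightly sharper $(t-1)^t$.
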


\begin{proof}
Let $G$ be a noncyclic group of order $p^n$ where $p$ is a prime, we need to bound $p$ and $n$ in terms of $t=c(G)$. If $p=2$ then $p<t$ since $G$ is a noncyclic group. If $p \geqslant 3$, then by \cite[Theorem 4.10 of Chapter 5]{Gorenstein}, $G$ has a subgroup isomorphic to $C_p \times C_p$, which has $p+2$ cyclic subgroups, therefore $p \leqslant t$. If $p^m$ is the exponent of $G$ and $g \in G$ has order $p^m$, then $\langle g \rangle$ has $m+1$ cyclic subgroups, therefore $m \leqslant t-1$ and $\exp(G) \leqslant t^{t-1}$. Since each cyclic subgroup of $G$ has at most $\varphi(p^m) = p^{m-1}(p-1)$ generators, $G$ has at least $\frac{p^n-1}{p^{m-1}(p-1)}$ cyclic subgroups, therefore
$$t\geqslant \frac{p^n-1}{p^{m-1}(p-1)}=\frac{p^{n-1}+p^{n-2}+\ldots+p+1}{p^{m-1}}\geqslant \frac{p^{n-1}}{p^{m-1}}=p^{n-m}.$$
Thus $p^{n-m} \leqslant t$ hence $|G| = p^n = p^m \cdot p^{n-m} \leqslant t^{t-1} \cdot t = t^t$.
\end{proof}

Theorem \ref{CB} is now a corollary of the following result.

\begin{thm} \label{AxBcyclic}
Let $G$ be a finite group with $c(G) = t$. Then $G \cong A \times B$ where $A$ is cyclic, $|A|,|B|$ are coprime, the Sylow subgroups of the center of $B$ have order at most $t^t$ and $|B| \leqslant \alpha^t \cdot t^{t^2}$ where $\alpha$ is the constant in \cite{Pyber}.
\end{thm}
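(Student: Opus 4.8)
The plan is to construct the decomposition explicitly. First I would let $\pi$ be the set of primes $p$ such that a Sylow $p$-subgroup of $G$ is cyclic and contained in $Z(G)$, and let $A$ be the product of these Sylow subgroups. As they have pairwise coprime orders and each is cyclic, $A$ is a cyclic normal Hall subgroup of $G$ with $A \leqslant Z(G)$, and $|G:A|$ is coprime to $|A|$. By Schur--Zassenhaus $A$ has a complement $B$ in $G$, and since $A$ is central this gives $G = A \times B$ with $A$ cyclic and $|A|, |B|$ coprime. The point of the construction is that for every prime $p$ dividing $|B|$, a Sylow $p$-subgroup $P$ of $B$ --- which is also a Sylow $p$-subgroup of $G$, and which lies in $Z(B)$ if and only if it lies in $Z(G)$ --- is \emph{not} both cyclic and central in $B$.

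Next I would bound the Sylow subgroups of $Z(B)$. Fix a prime $p$ dividing $|Z(B)|$ and let $Q$ be the (unique, since $Z(B)$ is abelian) Sylow $p$-subgroup of $Z(B)$, so that $c(Q) \leqslant c(B) \leqslant c(G) = t$. If $Q$ is noncyclic, Proposition \ref{pgroup} applied to $Q$ gives $|Q| \leqslant c(Q)^{c(Q)} \leqslant t^t$. If $Q$ is cyclic, then $|Q| = p^m$ with $m + 1 = c(Q) \leqslant t$, so it suffices to prove $p \leqslant t$. Let $P \geqslant Q$ be a Sylow $p$-subgroup of $B$. If $P$ is noncyclic, then the argument in the proof of Proposition \ref{pgroup} already gives $p \leqslant t$ (via $C_p \times C_p \leqslant P$ when $p$ is odd, and $c(P) \geqslant 4$ when $p = 2$). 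If $P$ is cyclic, then by the previous paragraph $P$ is noncentral in $B$; if moreover $P$ is not normal in $B$, then $B$ has at least $p+1$ Sylow $p$-subgroups, each isomorphic to $P$ and hence each a distinct cyclic subgroup of $B$, so $c(B) \geqslant p+1$.

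The remaining case --- which I expect to be the main obstacle --- is $P$ cyclic, normal and noncentral in $B$. Here $C := C_B(P)$ is a proper subgroup of $B$ containing the Sylow subgroup $P$, so $p \nmid |B:C|$; choosing a prime $q$ dividing $|B:C|$ and a Sylow $q$-subgroup of $B$ not contained in $C$ produces a $q$-element $g \in B$, with $q \neq p$, acting nontrivially on $P \cong C_{p^n}$. Since $\Aut(C_{2^n})$ is a $2$-group, this forces $p$ to be odd; and since the kernel of the restriction map $\Aut(C_{p^n}) \to \Aut(\Omega_1(P)) \cong \Aut(C_p)$ is a $p$-group while $g$ induces an automorphism of $q$-power order, $g$ acts nontrivially on the characteristic subgroup $\Omega_1(P) \cong C_p$. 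Consequently $\langle \Omega_1(P), g \rangle$ is a subgroup of $B$ isomorphic to $C_p \rtimes \langle g \rangle$ with nontrivial action, which has $p$ Sylow $q$-subgroups, all cyclic of the same order and pairwise distinct, so $c(B) \geqslant p$. In every case $|Q| = p^m \leqslant t^{t-1} \leqslant t^t$, which proves the statement about the Sylow subgroups of $Z(B)$.

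Finally I would put everything together. Since $Z(B)$ is the internal direct product of its Sylow subgroups and $c$ is multiplicative on coprime direct products, $t = c(G) \geqslant c(Z(B)) = \prod_{S} c(S) \geqslant 2^{\omega}$, where the product runs over the Sylow subgroups $S$ of $Z(B)$ and $\omega$ is the number of primes dividing $|Z(B)|$; hence $\omega \leqslant \log_2 t \leqslant t$, so $|Z(B)| = \prod_S |S| \leqslant (t^t)^{\omega} \leqslant t^{t^2}$. Then Proposition \ref{noncommuting} together with $\lambda(B) \leqslant c(B) \leqslant t$ yields $|B| = |B:Z(B)| \cdot |Z(B)| \leqslant \alpha^{\lambda(B)} \cdot t^{t^2} \leqslant \alpha^{t} \, t^{t^2}$, which is the bound claimed. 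All of this is routine once the cyclic-normal-noncentral Sylow case is handled, so that case --- producing enough cyclic subgroups from the conjugation action on $\Omega_1(P)$ --- is where the real work lies.
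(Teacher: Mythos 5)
Your proof is correct, and it reaches the theorem by a genuinely different route from the paper's. The paper argues by induction on $|G|$: it only ever splits off a cyclic Sylow subgroup when the corresponding Sylow subgroup $P$ of $Z(G)$ satisfies $|P|>t^t$, and that size threshold is what forces the ambient Sylow $p$-subgroup $Q$ to be cyclic, normal, and then (via counting the $\geqslant p$ conjugates of a cyclic subgroup $\langle u\rangle$ of a Schur--Zassenhaus complement) a genuine direct factor; the configuration you single out as the main obstacle --- a cyclic, normal, noncentral Sylow subgroup --- never has to be analysed there, because in the paper's setting it instantly gives $p\leqslant t$ and contradicts the threshold. You instead define $A$ in one step as the cyclic central Hall subgroup generated by \emph{all} cyclic central Sylow subgroups, which removes the induction and makes the decomposition canonical, but obliges you to bound every prime dividing $|Z(B)|$ directly; your treatment of the hard case via $\Aut(C_{p^n})$, the induced faithful action on $\Omega_1(P)$, and the $p$ distinct cyclic Sylow $q$-subgroups of $\Omega_1(P)\rtimes\langle g\rangle$ is sound and is the morally parallel step to the paper's conjugate count (both extract $\geqslant p$ cyclic subgroups from a failure of centrality), just realized by a different mechanism. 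Both arguments rest on the same external ingredients --- Proposition \ref{pgroup}, Schur--Zassenhaus, and Proposition \ref{noncommuting} applied through $\lambda(B)\leqslant c(B)\leqslant t$ --- and your final assembly ($\omega\leqslant\log_2 t$, hence $|Z(B)|\leqslant t^{t^2}$ and $|B|\leqslant\alpha^t t^{t^2}$) matches the paper's bound exactly.
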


\begin{proof}
We prove, by induction on $|G|$, that $G \cong A \times B$ where $A$ is cyclic, $|A|,|B|$ are coprime, the Sylow subgroups of the center of $B$ have order at most $t^t$ and $|B| \leqslant \alpha^t \cdot t^{t^2}$. Proposition \ref{pgroup} implies that every noncyclic $p$-subgroup of $G$ has order at most $t^t$. By Cauchy's theorem, for every prime $p$ dividing $|G|$, there exists a cyclic subgroup of $G$ of order $p$, therefore $|G|$ has at most $t$ prime divisors. So if $Z(G)$ has no Sylow subgroup of order larger than $t^t$ then $|Z(G)| \leqslant t^{t^2}$. Proposition \ref{noncommuting} now implies that $|G| \leqslant \alpha^t \cdot t^{t^2}$ and hence we can choose $A=\{1\}$. Now assume that $Z(G)$ has a Sylow subgroup $P$ with the property that $|P| > t^t$. In particular, $P$ is cyclic and of course $P \unlhd G$.

Let $Q$ be a Sylow $p$-subgroup of $G$ containing $P$ and write $|Q|=p^k$. If $Q$ is noncyclic, then $|P| \leqslant |Q| \leqslant t^t$, a contradiction. So $Q$ is cyclic, hence $Q$ has $k+1$ cyclic subgroups. It follows that $k+1 \leqslant c(G) = t$. 
If $Q$ is not normal in $G$, then $1 < n_p(G) \equiv 1 \mod p$ by Sylow's theorem, hence $p < n_p(G) \leqslant t$, therefore $|P| \leqslant |Q| = p^k \leqslant t^{t-1} < t^t$, a contradiction.
We deduce that $Q$ is cyclic and normal in $G$, so that $G = Q \rtimes K$ for a suitable $K$, by the Schur-Zassenhaus theorem. If this is not a direct product, then $K$ is not normal in $G$, so $Q$ is not contained in $N_G(K)$. Therefore $Q$ cannot normalize all cyclic subgroups of $K$, in other words there is some $u \in K$ such that $Q$ is not contained in $N_G(\langle u \rangle)$. It follows that $p$ divides the index $|G:N_G(\langle u \rangle)|$, which equals the number of conjugates of $\langle u \rangle$ in $G$, hence $p \leqslant c(G) = t$. We deduce that $|P| \leqslant |Q| = p^k \leqslant t^{t-1} < t^t$, a contradiction. This implies that $G$ is a direct product $Q \times K$ where $Q$ is a cyclic $p$-group and $p$ does not divide $|K|$. By induction, the result holds for $K$, so it holds for $G$ and we are done.
\end{proof}

\section{Proof of Theorem \ref{setB}}

We now prove Theorem \ref{setB}, which we restate here for convenience.

Call $\mathscr{B}$ the set of positive integers $n$ such that there are only finitely many noncyclic groups $G$ with $c(G)=n$, up to isomorphism. 
We will prove that $\mathscr{B} = \{1,4,6,9\} \cup \mathscr{P}$ where $\mathscr{P}$ is the set of prime numbers.

From the work of Ashrafi and Haghi \cite{AH,HA2018} (see also \cite{Kalra} and \cite{Zhou}) we immediately deduce that $3,4,5,6,7,9 \in \mathscr{B}$ and $8,10 \not \in \mathscr{B}$. We need to show that an integer $n \geqslant 10$ is prime if and only if there are only finitely many noncyclic finite groups with precisely $n$ cyclic subgroups. By Theorem \ref{AxBcyclic}, there exists a function $f$ such that, if $G$ is any noncyclic finite group with $c(G)=n$, we have $G \cong A \times B$ with $|A|,|B|$ coprime, $A$ cyclic and $|B| \leqslant f(n)$. In particular $c(G) = c(A) \cdot c(B)$ so, if $n$ is prime, the fact that $A$ is cyclic and $G$ is noncyclic implies that $A=\{1\}$, so $G=B$ and $|G| \leqslant f(n)$. Conversely, assume $n$ is not a prime number and write $n=ab$ with $a,b > 1$. Since $n \geqslant 10$ we may assume without loss of generality that $b \geqslant 4$. Then we can write $b-2 = kq$ for some prime number $q$ and some integer $k \geqslant 1$, so that $c(C_{q^k} \times C_q) = qk+2 = b$ by Lemma \ref{lambda-abelian} and, if $r$ is any prime number distinct from $q$, the noncyclic group $C_{r^{a-1}} \times C_{q^k} \times C_q$ has $ab=n$ cyclic subgroups. Since there are infinitely many such primes $r$, we obtain the result.

\end{document}